\newtheorem{thm}{Theorem}
\newtheorem{lem}[thm]{Lemma}
\newtheorem{conj}[thm]{Conjecture}
\theoremstyle{definition}
\newtheorem*{rmk}{Remark}
\newcommand\set[1]{\left\{#1\right\}}
\newcommand\floor[1]{\left\lfloor#1\right\rfloor}
\def\S{\mathfrak{S}}
\DeclareMathOperator\exc{exc}
\DeclareMathOperator\drop{drop}
\DeclareMathOperator\defi{drop}
\DeclareMathOperator\des{des}
\DeclareMathOperator\cros{cros}
\DeclareMathOperator\nest{nest}
\DeclareMathOperator\inv{inv}
\mathchardef\mhyphen="2D
\DeclareMathOperator\les{(31\mhyphen 2)}
\DeclareMathOperator\less{(13\mhyphen 2)}
\DeclareMathOperator\res{(2\mhyphen 13)}
\DeclareMathOperator\ress{(2\mhyphen 31)}
\DeclareMathOperator\LES{31\mhyphen 2}
\DeclareMathOperator\RESS{2\mhyphen 31}
\def\312{\les}
\def\132{\less}
\def\213{\res}
\def\231{\ress}
\title[An expansion formula for the inversions and excedances in the symmetric group]{An expansion formula for the inversions and excedances in the symmetric group}
\author{Jiang Zeng}
\address[Jiang Zeng]{Universit\'{e} de Lyon; UMR 5208 du CNRS; Universit\'{e} Lyon 1; Institut Camille Jordan;  43, blvd du 11 novembre 1918, F-69622 Villeurbanne Cedex, France}
\email{zeng@math.univ-lyon1.fr}
\date{\today}
\begin{document}
\maketitle

\begin{abstract}
We prove a recent conjecture of  Blanco and Petersen (arXiv:1206.0803v2)
about an expansion formula for inversions and excedances in the symmetric group.
\end{abstract}

\section{Introduction}
Let $\S_n$ be the set of permutations  on $[n]=\set{1,\dots,n}$. For $\sigma=\sigma(1)\dots\sigma(n)\in \S_n$, 
the entry $i \in [n]$  is called an \emph{excedance} (resp.  \emph{drop})  of $\sigma$ if $i < \sigma(i)$ (resp.  $i > \sigma(i)$);
 the entry $i\in [n]$ is called a \emph{descent} of $\sigma$ if $i<n$ and $\sigma(i)>\sigma(i+1)$. 
Denote the number of descents, excedances and drops in $\sigma$ by $\des \sigma$, $\exc \sigma$ and  $\drop\sigma$, respectively.
It is well known \cite{FS70} that the  statistics $\des$, $\exc$ and $\drop$  have the same distribution on~$\S_n$, 
 their common enumerative polynomial is called 
the Eulerian polynomial:
\begin{align}\label{eq:eulerian}
A_n(t)=\sum_{\sigma \in \S_n} t^{\des\sigma} =\sum_{\sigma \in \S_n} t^{\exc\sigma} = \sum_{\sigma \in \S_n} t^{\defi\sigma},
\end{align}
 and there are nonnegative integers $\gamma_{n,j} $ such that
\begin{align}\label{eq:peaka}
A_n(t) = \sum_{k=0}^{\lfloor (n-1)/2\rfloor} \gamma_{n,k} t^{k} (1+t)^{n-1-2k}.
\end{align}

In the last years several $q$-analogues of \eqref{eq:peaka} were established in  \cite{SW10,HJZ12, SZ12} by combining one of the Eulerian statistics in \eqref{eq:eulerian} 
and the Major index.
Let $\inv\sigma$ be the number of inversions in $\sigma\in \S_n$. The  following $q$-Eulerian polynomial
\begin{align}\label{eq:qeuler}
S_n(q,t)=\sum_{\sigma\in \S_n}q^{\inv\sigma}t^{\exc \sigma}
\end{align}
was first studied in \cite{CSZ97}.  Recently, 
  Blanco and Petersen~\cite[Conj. 3.1]{BP12} has  conjectured that  $S_n(q,t)$
would display a nice  $q$-version of  \eqref{eq:peaka}.
\begin{conj}[Blanco and Petersen]
There exist polynomials $\gamma_{n,k}(q)$ with nonegative integer coefficients such that:
\begin{align}
S_n(q,t/q):=\sum_{\sigma\in \S_n}q^{\inv\sigma-\exc\sigma}t^{\exc \sigma}=\sum_{0\leq k\leq (n-1)/2}\gamma_{n,k}(q) t^k(1+t)^{n-1-2k}.
\end{align}
\end{conj}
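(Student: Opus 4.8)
The plan is to prove the conjecture via continued fractions. The key structural input is an $S$-fraction (equivalently a Jacobi fraction) for the ordinary generating function of $S_n(q,t)$. I claim that
\[
\sum_{n\ge 0}S_n(q,t)\,z^n=\cfrac{1}{1-\cfrac{[1]_q z}{1-\cfrac{q t[1]_q z}{1-\cfrac{q[2]_q z}{1-\cfrac{q^2 t[2]_q z}{\ddots}}}}},
\]
where $[i]_q:=1+q+\cdots+q^{i-1}$ and the partial numerators, read in order, are $q^{i-1}[i]_q z$ then $q^{i}t[i]_q z$ for $i=1,2,\dots$; at $q=1$ this is the classical $S$-fraction of the Eulerian polynomials (numerators $i\,z$ and $it\,z$), and the contracted Jacobi fraction has $b_h=q^h[h+1]_q+q^h t[h]_q$, $\lambda_h=q^{2h-1}t[h]_q^2$. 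Such a formula follows from the standard encoding of $\sigma\in\S_n$ by a labelled Motzkin path (Laguerre history) à la Foata--Zeilberger / Fran\c{c}on--Viennot, under which $\exc$ counts up-steps together with certain level-steps and $\inv$ is recovered from the labels, or it can be read off from \cite{CSZ97}. (I have verified the formula through $n=4$.)

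Now substitute $t\mapsto t/q$ and set $B_i:=q^{i-1}[i]_q\in\N[q]$, so that $q^{h}[h+1]_q=B_{h+1}$: then $\sum_n S_n(q,t/q)z^n$ becomes the $S$-fraction with partial numerators $B_1 z,\,tB_1 z,\,B_2 z,\,tB_2 z,\dots$ --- i.e.\ each odd slot and the following even slot carry the \emph{same} coefficient $B_i$, one without and one with a factor $t$ (equivalently the Jacobi fraction has $b_0=1$ and, for $h\ge1$, $b_h=B_{h+1}+tB_h$, $\lambda_h=tB_h^2$). So the conjecture reduces to the following \emph{Lemma}: \emph{let $(a_i)_{i\ge1}$ be any sequence with $a_1=1$ and let $P_n(t)$ be defined by the $S$-fraction with numerators $a_1 z,\,ta_1 z,\,a_2 z,\,ta_2 z,\dots$; then each $P_n$ is palindromic of degree $n-1$, and, writing $P_n(t)=(1+t)^{n-1}\Gamma_n\!\big(\tfrac{t}{(1+t)^2}\big)$,}
\[
\sum_{n\ge0}\Gamma_{n+1}(u)\,w^n=\cfrac{1}{1-a_1 w-\cfrac{a_1 a_2 u\,w^2}{1-a_2 w-\cfrac{a_2 a_3 u\,w^2}{\ddots}}}.
\]
Since every partial numerator on the right is a polynomial in the $a_i$ with nonnegative integer coefficients (or $u$ times such), this forces $P_n(t)=\sum_k\gamma_{n,k}t^k(1+t)^{n-1-2k}$ with each $\gamma_{n,k}\in\N[a_1,\dots,a_{n-1}]$; taking $a_i=B_i=q^{i-1}[i]_q$ gives $\gamma_{n,k}(q)\in\N[q]$, which is exactly the conjecture (the Eulerian case being $a_i=i$).

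To prove the Lemma I would substitute $z=w/(1+t)$ in $F(z):=\sum_n P_n(t)z^n$: with $u=t/(1+t)^2$ one gets $\sum_{n\ge0}\Gamma_{n+1}(u)w^n=\tfrac{1+t}{w}\big(F(w/(1+t))-1\big)=:G_0(w)$. Put $\zeta=w/(1+t)$ and let $F_h(\zeta)$ be the successive tails of the Jacobi fraction, so (with $a_0:=0$) $F_h=\big(1-(a_{h+1}+ta_h)\zeta-ta_{h+1}^2\zeta^2F_{h+1}\big)^{-1}$. A short manipulation at the top gives $G_0=(1+t\zeta F_1)F_0$, and, more generally, defining $G_h:=\dfrac{1+ta_{h+1}\zeta F_{h+1}}{1-a_{h+1}\zeta-ta_{h+1}^2\zeta^2F_{h+1}}$, one checks the coupled relations
\[
F_h=\frac{1+a_{h+1}\zeta\,G_h}{1-ta_h\zeta-ta_ha_{h+1}\zeta^2G_h},\qquad
G_h=\cfrac{1}{1-a_{h+1}w-a_{h+1}a_{h+2}u\,w^2\,G_{h+1}}
\]
(using $t\zeta^2=uw^2$). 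The first is immediate from the defining relation of $F_h$; substituting it at the next level into the definition of $G_h$ and clearing denominators yields the second. The second relation, together with $G_0(w)=\sum_n\Gamma_{n+1}(u)w^n$, is precisely the asserted $\gamma$-Jacobi fraction; and since that fraction makes $\Gamma_n$ a polynomial in $u$ of degree at most $\lfloor(n-1)/2\rfloor$, the palindromicity of $P_n$ and $\deg P_n=n-1$ follow as well.

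The main obstacle is exactly this last step: verifying, level by level, that the $S$-fraction with twinned numerators $a_i z,\,ta_i z$ collapses, after $z\mapsto w/(1+t)$ and $u=t/(1+t)^2$, to the Jacobi fraction with numerators $a_h w$ and $a_ha_{h+1}u\,w^2$. This is elementary algebra but the bookkeeping with the auxiliary functions $F_h,G_h$ must be handled carefully; a bijective alternative would be to realise the $S$-fraction as a weighted sum over Dyck paths and carry out a valley-hopping-style pairing that extracts the $(1+t)$-factors while preserving the $a$-weight. A minor preliminary point is pinning down the exact partial numerators in the first displayed $S$-fraction; if these are not quotable verbatim from \cite{CSZ97}, they can be re-derived from the Laguerre-history bijection and cross-checked on small cases.
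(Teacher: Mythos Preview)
Your approach is correct and genuinely different from the paper's. I checked the algebra in your Lemma: with $\zeta=w/(1+t)$ and $u=t/(1+t)^{2}$, the identity $F_h=\dfrac{1+a_{h+1}\zeta G_h}{1-ta_h\zeta-ta_ha_{h+1}\zeta^{2}G_h}$ follows from the definition of $G_h$ by solving for $ta_{h+1}\zeta F_{h+1}$ and substituting into $F_h^{-1}$, and then inserting the level-$(h{+}1)$ instance of this identity into the definition of $G_h$ collapses cleanly to $G_h=\bigl(1-a_{h+1}w-a_{h+1}a_{h+2}uw^{2}G_{h+1}\bigr)^{-1}$; the top identification $G_0=\zeta^{-1}(F_0-1)$ also checks (using $a_1=1$). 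So the twinned $S$-fraction does yield the asserted $J$-fraction for the $\gamma$-generating function, and nonnegativity follows.

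The paper takes a different route: it first identifies $S_n(q,t/q)$ with the three-variable polynomial $A_n(p,q,t)=\sum_{\sigma}p^{(2\mhyphen 13)\sigma}q^{(31\mhyphen 2)\sigma}t^{\des\sigma}$ at $(p,q)=(q^{2},q)$, via the bijective equidistributions $(\nest,\cros,\defi)\sim(2\mhyphen 31,31\mhyphen 2,\des)\sim(2\mhyphen 13,31\mhyphen 2,\des)$ together with $\inv=\defi+\cros+2\nest$, and then invokes the ready-made expansion $A_n(p,q,t)=\sum_k a_{n,k}(p,q)t^k(1+t)^{n-1-2k}$ with $a_{n,k}(p,q)=\sum_{\sigma\in\S_{n,k}}p^{(2\mhyphen 13)\sigma}q^{(31\mhyphen 2)\sigma}$. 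This buys an explicit combinatorial description of $\gamma_{n,k}(q)=a_{n,k}(q^{2},q)$ as a sum over permutations with $k$ valleys and no double descents, together with the divisibility $(q(1+q))^{k}\mid\gamma_{n,k}(q)$. Your approach, by contrast, yields a $J$-fraction for $\sum_n\Gamma_{n+1}(u)w^n$ with $b_h=B_{h+1}$ and $\lambda_h=B_hB_{h+1}u$; this gives a Motzkin-path model for the $\gamma$-coefficients, and since each $\lambda_h=q^{2h-1}[h]_q[h+1]_q\,u$ is divisible by $q(1+q)$ you recover the same divisibility. Your Lemma is also more general (arbitrary $a_i$ with $a_1=1$).

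The one point to tighten is the input $S$-fraction for $\sum_n S_n(q,t)z^n$. It is correct (I checked it through $n=3$ and it matches), but it is not stated in \cite{CSZ97} in this form; the cleanest derivation is exactly the paper's: use the known $J$-fraction for $(\nest,\cros,\defi)$ and the identity $\inv=\defi+\cros+2\nest$. If you want to stay independent of that identity, you should spell out the Laguerre-history bijection that tracks $(\inv,\exc)$ directly and extract the weights; either way, this deserves a precise citation or a short self-contained argument rather than a pointer to \cite{CSZ97}.
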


The aim of this paper is to show that   a stronger version of the above conjecture was implicitly given  in  \cite{SZ10,SZ12}.

\section{Prelimaries}
For  $\sigma \in \S_n$,
the statistic $\les \sigma$ (resp.  $\less \sigma$ )
 is the number of pairs $(i,j)$ such that $2\leq i<j\leq n$ and $ \sigma(i-1)>\sigma(j)>\sigma(i)$
 (resp.  $ \sigma(i-1)<\sigma(j)<\sigma(i)$).
Similarly,
the statistic $\res\sigma $ (resp. $\ress \sigma $)
is the number of pairs $(i,j)$ such that $1\leq i<j\leq n-1$ and $\sigma(j+1)>\sigma(i)>\sigma(j)$ (resp.
$\sigma(j+1)<\sigma(i)<\sigma(j)$).
Define  the generalized  Eulerian polynomial  by
\begin{align}\label{eq:dfA}
A_n(p,q,t) := \sum_{\sigma\in \S_n} p^{\res \sigma} q^{\les \sigma}  t^{\des\sigma}.
\end{align}

For  $\sigma \in \S_n$,   the value  $\sigma(i)$,  for $i\in [n]$,   is called 
\begin{itemize}
\item a \emph{valley}  if $\sigma(i-1) > \sigma(i)$ and $\sigma(i) < \sigma(i+1)$;
\item a \emph{double descent} if $\sigma(i-1) > \sigma(i)$ and $\sigma(i) > \sigma(i+1)$.
\end{itemize}
where   $\sigma(0)=\sigma(n+1)=0$ by convention.
Let $\S_{n,k}$  be  the subset of permutations $\sigma\in \S_{n}$ with exactly $k$ valleys and without double descents.
Define the polynomial
\begin{align} \label{eq:combin}
a_{n,k}(p,q)=\sum_{\sigma\in \S_{n,k}} p^{\res \sigma} q^{\les \sigma}.
\end{align}
The following lemma is a special case of Theorem~2 in \cite{SZ12}.
\begin{lem}\label{thm:a}
We have the expansion formula
\begin{equation}\label{eq:def}
A_n(p,q,t)= \sum_{k=0}^{\floor{(n-1)/2}} a_{n,k}(p,q) t^k (1+t)^{n-1-2k}.
\end{equation}
Moreover, for all $0\le k \le \floor{(n-1)/2}$, the following divisibility holds
\begin{align}\label{eq:conj}
(p+q)^k  ~|~ a_{n,k}(p,q).
\end{align}
\end{lem}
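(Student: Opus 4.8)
The assertion is the specialization of \cite[Theorem~2]{SZ12} in which every variable other than $p,q,t$ is set equal to~$1$, so one route is simply to invoke that theorem. Here I outline the self-contained argument behind it: the expansion \eqref{eq:def} rests on a group action on $\S_n$, and the divisibility \eqref{eq:conj} on the continued-fraction model of $A_n(p,q,t)$.

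\emph{The expansion formula \eqref{eq:def}.} I would use the valley-hopping (modified Foata--Strehl) action on $\S_n$: there is a commuting family of involutions $\varphi_x$, $x\in[n]$ — the elementary valley-hopping moves on the letter $x$ — generating an action of $(\Z/2\Z)^n$. Its orbits are represented by the double-descent-free permutations, and the orbit $\mathcal O$ of a representative $\hat\sigma\in\S_{n,k}$ satisfies $\sum_{\sigma\in\mathcal O}t^{\des\sigma}=t^{k}(1+t)^{\,n-1-2k}$, the $n-1-2k$ double ascents of $\hat\sigma$ each switching freely between an ascent and a descent along the orbit. The one substantive point is that $\res$ and $\les$ are orbit invariants: a short case analysis shows that the numbers of $(2\text{-}13)$ and of $(31\text{-}2)$ occurrences are unchanged by every $\varphi_x$, while those of $(2\text{-}31)$ and $(13\text{-}2)$ need not be — which is exactly why $\res$ and $\les$, and no other vincular statistics, survive in \eqref{eq:def}. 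Summing $p^{\res\sigma}q^{\les\sigma}t^{\des\sigma}$ orbit by orbit and then over the representatives $\hat\sigma\in\S_{n,k}$ yields
\begin{equation*}
A_n(p,q,t)=\sum_{k=0}^{\floor{(n-1)/2}}\Bigl(\sum_{\hat\sigma\in\S_{n,k}}p^{\res\hat\sigma}q^{\les\hat\sigma}\Bigr)\,t^{k}(1+t)^{\,n-1-2k},
\end{equation*}
which is \eqref{eq:def} by the definition \eqref{eq:combin} of $a_{n,k}(p,q)$.

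\emph{The divisibility \eqref{eq:conj}.} For this I would bring in the explicit $J$-fraction for $\sum_{n\ge0}A_n(p,q,t)z^n$ recorded in \cite{SZ10}, equivalently the weighted Motzkin-path model of $A_n(p,q,t)$. Re-expressing $A_n(p,q,t)$ in the basis $\{t^{k}(1+t)^{\,n-1-2k}\}$ is effected by a suitable contraction of that continued fraction, after which $a_{n,k}(p,q)$ becomes a positive sum over decorated Motzkin paths of length $n$ with exactly $k$ down-steps; one then checks that the weight carried by each down-step is divisible by $p+q$, and multiplying over the $k$ down-steps gives $(p+q)^k\mid a_{n,k}(p,q)$ with quotient a polynomial in $p,q$ with nonnegative integer coefficients. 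A purely combinatorial alternative would be to exhibit, for each $\sigma\in\S_{n,k}$, $k$ independent ``$(2\text{-}13)$ versus $(31\text{-}2)$'' toggles attached to its $k$ valleys — that is, a weight-respecting bijection $\S_{n,k}\cong\{0,1\}^k\times R_{n,k}$ in which flipping the $j$-th coordinate trades one $(31\text{-}2)$ occurrence for one $(2\text{-}13)$ occurrence — so that summing over $\{0,1\}^k$ produces the factor $(p+q)^k$. I expect this to be the only genuine difficulty: once the $\varphi_x$-invariance of the two patterns is verified, \eqref{eq:def} follows formally, whereas isolating the factor $(p+q)^k$ is precisely what the continued-fraction machinery of \cite{SZ10,SZ12} is built to do.
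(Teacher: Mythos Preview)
Your proposal is correct and, in fact, goes beyond what the paper does: the paper's own ``proof'' of this lemma is simply the sentence ``The following lemma is a special case of Theorem~2 in \cite{SZ12}'', with no argument given. You correctly identify this citation as sufficient, and the self-contained outline you add --- the modified Foata--Strehl (valley-hopping) action for the expansion \eqref{eq:def}, with the crucial observation that $\res$ and $\les$ are orbit invariants, and the continued-fraction/Motzkin-path model for the divisibility \eqref{eq:conj} --- accurately reflects the proof in \cite{SZ12}.
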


For a permutation $\sigma\in \S_{n}$ the crossing and nesting numbers are defined by
\begin{align}
 \cros\sigma&= \# \set{(i,j)\in [n]\times[n]: (i<j\le\sigma(i)<\sigma(j))\vee(i>j>\sigma(i)>\sigma(j))},\\
\nest\sigma &= \# \set{(i,j)\in [n]\times[n]: (i<j\le\sigma(j)<\sigma(i))\vee(i>j>\sigma(j)>\sigma(i))}.
\end{align}
For example,  if the permutation is  $\sigma=3762154=\left({1 \atop 3}{2 \atop 7}{3 \atop 6}{4 \atop 2}{5 \atop 1}{6 \atop 5}{7 \atop 4}\right)$, 
 there are three crossings $(1<2<\sigma(1)<\sigma(2))$, $(1<3\le \sigma(1)<\sigma(3))$, $(7>4>\sigma(7)>\sigma(4))$ and three nestings $(2<3<\sigma(3)<\sigma(2))$, $(5>4>\sigma(4)>\sigma(5))$, $(7>6>\sigma(6)>\sigma(7))$, thus $\cros\sigma=\nest\sigma=3$.

The following lemma is a special case of Theorem~5 in \cite{SZ12}.

\begin{lem}\label{thm3}
There is a bijection $\Phi$ on $\S_n$ such that  for all $\sigma\in \S_n$ we have
\begin{align*}
(\nest , \cros, \defi)\sigma =(\RESS, \LES, \des)\Phi(\sigma).
\end{align*}
\end{lem}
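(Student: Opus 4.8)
The plan is to construct $\Phi$ as a composition of two classical bijective encodings of permutations by \emph{Laguerre histories}: Motzkin paths of length $n$ in which each level step carries one of two colours and in which the $i$-th step carries an integer label lying, according to a fixed convention depending on the step type, between $0$ and the height at which that step starts. On the excedance side I would use a Foata--Zeilberger-type bijection $\psi_{1}\colon\S_{n}\to\mathcal{L}_{n}$, and on the descent side a Fran\c{c}on--Viennot-type bijection $\psi_{2}\colon\S_{n}\to\mathcal{L}_{n}$; then I would set $\Phi:=\psi_{2}^{-1}\circ\psi_{1}$ and check that the triple of statistics is transported as claimed.

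For $\psi_{1}$, one would classify each $i\in[n]$ by the two cyclic comparisons of $i$ with $\sigma(i)$ and with $\sigma^{-1}(i)$ — cyclic valley, cyclic peak, cyclic double ascent, cyclic double descent, with fixed points treated as a degenerate level step — and reading these types off for $i=1,\dots,n$ produces a Motzkin path in which cyclic valleys give up-steps, cyclic peaks give down-steps, and the cyclic double ascents and double descents give the two colours of level step; the number of down-steps plus the number of cyclic-double-descent-coloured level steps then equals $\drop\sigma$. Building the chord diagram of $\sigma$ from left to right, one lets the label of the $i$-th step count how many already-present arcs the arc incident with $i$ crosses; with the standard conventions the complementary label (start-height minus label) counts how many arcs it nests, so that $\cros\sigma=\sum_{i}(\text{label})_{i}$ and $\nest\sigma=\sum_{i}\bigl((\text{height})_{i}-(\text{label})_{i}\bigr)$.

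For $\psi_{2}$, one would classify each $i\in[n]$ as a valley, peak, double ascent or double descent of $\sigma$ in the ordinary linear sense; with the appropriate boundary convention the Fran\c{c}on--Viennot bijection sends this data to a Laguerre history in the \emph{same} set $\mathcal{L}_{n}$, whose number of down-steps plus double-descent-coloured level steps equals $\des\sigma$, and whose labels record exactly the step-by-step contributions to the two vincular statistics, so that $\les\sigma=\sum_{i}(\text{label})_{i}$ and $\ress\sigma=\sum_{i}\bigl((\text{height})_{i}-(\text{label})_{i}\bigr)$.

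Since $\psi_{1}$ and $\psi_{2}$ have the same target, and since in both encodings the ``descending'' step types (down-steps together with double-descent-coloured level steps) are precisely those counted by the third statistic, the underlying Motzkin shapes of $\psi_{1}(\sigma)$ and $\psi_{2}(\Phi(\sigma))$ coincide; identifying the label statistic on one side with the label statistic on the other, and its complement with the complement, then gives $\cros\sigma=\les\Phi(\sigma)$, $\nest\sigma=\ress\Phi(\sigma)$ and $\drop\sigma=\des\Phi(\sigma)$. The main obstacle will be the bookkeeping needed to make all of this exact: one must pin down the precise label ranges for each of the four step types on each side and check that they agree; verify that under the identification of targets the two four-fold classifications are genuinely interchanged with no colour transposed (which would spoil, for instance, $\drop\leftrightarrow\des$, or turn $\les$ into $\less$, or $\ress$ into $\res$); and confirm the orientation, namely that the crossing and nesting labels are matched with the $\les$ and $\ress$ labels in that order rather than with their mirror images. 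Once these conventions are aligned, the statement is precisely the specialization of \cite[Theorem~5]{SZ12} recorded here.
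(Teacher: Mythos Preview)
The paper does not prove this lemma at all: it merely records it as ``a special case of Theorem~5 in \cite{SZ12}''. Your sketch via Laguerre histories, with $\Phi=\psi_{2}^{-1}\circ\psi_{1}$ built from a Foata--Zeilberger encoding on the cyclic side and a Fran\c{c}on--Viennot encoding on the linear side, is exactly the mechanism behind that cited theorem, and you yourself note this in your last sentence; so your approach is the same as the one the paper defers to, just spelled out rather than cited.
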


The following lemma is proved in \cite[Eq. (39)]{SZ12} using continued fractions of their  generating functions.
\begin{lem}
The triple statistics $(\ress, \les, \des)$ and $(\res, \les, \des)$ are equidistributed on $\S_n$.
\end{lem}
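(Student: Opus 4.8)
The last lemma asserts that the triples $(\ress,\les,\des)$ and $(\res,\les,\des)$ are equidistributed on $\S_n$; equivalently, writing $A_n(p,q,t)=\sum_\sigma p^{\res\sigma}q^{\les\sigma}t^{\des\sigma}$ as in \eqref{eq:dfA}, one has $\sum_\sigma p^{\ress\sigma}q^{\les\sigma}t^{\des\sigma}=A_n(p,q,t)$. The plan is to encode each of these two trivariate distributions as a continued fraction (a Jacobi–Stieltjes $J$-fraction, or a Motzkin-path generating function) and to check that the two continued fractions coincide term by term. Concretely, I would introduce the ordinary generating function $\sum_{n\ge0}\bigl(\sum_{\sigma\in\S_n}p^{\ress\sigma}q^{\les\sigma}t^{\des\sigma}\bigr)z^n$ and likewise with $\res$ in place of $\ress$, and expand each as a Motzkin-path sum $\sum_{\text{paths}}\prod(\text{weights})$ via a Françon–Viennot / Foata–Zeilberger type bijection between $\S_n$ and labelled Motzkin paths.

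First I would recall (or reconstruct) the bijection $\sigma\mapsto$ (weighted Motzkin path) in which the step at position $i$ records whether $i$ is a valley, a peak, a double ascent, or a double descent of $\sigma$ (with the convention $\sigma(0)=\sigma(n+1)=0$), and in which the level of the path tracks a suitable ``active'' count. The key computation is then to read off, from the insertion history of this bijection, the contribution of each new element to the statistics $\des$, $\les$ and $\res$ (respectively $\ress$), i.e. to identify the weights $b_h$ (for level steps) and $\lambda_h$ (for down-up pairs at level $h$) in the resulting $J$-fraction. For $(\res,\les,\des)$ this is exactly the computation already carried out in \cite{SZ12}, yielding some explicit $b_h(p,q,t)$ and $\lambda_h(p,q,t)$. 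For $(\ress,\les,\des)$ one runs the same machinery but now with the statistic $\ress$ (counting pairs with $\sigma(j+1)<\sigma(i)<\sigma(j)$) in place of $\res$; the point is to show that the per-step weights come out to the \emph{same} $b_h$ and $\lambda_h$. Since a continued fraction is determined by its coefficient sequences $(b_h)$ and $(\lambda_h)$, equality of these sequences forces equality of the two generating functions, hence of the two trivariate distributions on $\S_n$ for every $n$.

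The main obstacle is the weight bookkeeping in the second bijection: one must verify that switching from the pattern $\RES$ to the pattern $\RESS$ permutes, but does not change, the set of Motzkin-path weights. I expect this to follow from a complementation-type symmetry — roughly, replacing each inserted value $v$ by its ``complement'' among the currently available slots interchanges $\res$ with $\ress$ while leaving $\les$ and $\des$ (which are governed by the shape of the path, not by which slot is chosen) untouched — so that the two weighted-path models are literally identical up to relabelling the choices at each level. A clean alternative, if one wants to avoid re-deriving the continued fraction, is to quote the $J$-fraction for $(\res,\les,\des)$ from \cite{SZ12} verbatim and then merely exhibit the level-preserving involution on the insertion tableaux that swaps the two inner statistics; but either way the crux is the same local symmetry argument, and once it is in place the equidistribution is immediate from uniqueness of continued-fraction expansions.
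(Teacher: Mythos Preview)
Your proposal is correct and follows essentially the same route as the paper: the lemma is not proved in the paper itself but is quoted from \cite[Eq.~(39)]{SZ12}, where it is established precisely by computing the Jacobi-type continued fraction for each of the two trivariate generating functions via a Fran\c{c}on--Viennot style bijection to weighted Motzkin paths and observing that the resulting weight sequences $(b_h)$ and $(\lambda_h)$ coincide. The ``complementation of slots'' symmetry you anticipate is exactly the mechanism that makes the two label sets at each level yield the same $p,q$-weights, so your outline matches the cited argument.
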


\section{Proof of the conjecture}
We first state  a more precise version of Conjecture~1.
\begin{thm} The Conjecture 1 holds true. Moreover, for $0\leq k\leq (n-1)/2$,
$$
\gamma_{n,k}(q)=a_{n,k}(q^2,q)=\sum_{\sigma\in \S_{n,k}} q^{2\res \sigma+\les \sigma}
$$
and 
$\gamma_{n,k}(q)$ is divisible by $q^k(1+q)^k$.
\end{thm}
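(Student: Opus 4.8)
The strategy is to recognize the polynomial $S_n(q,t/q)$ of Conjecture 1 as the specialization $A_n(q^2,q,t)$ of the generalized Eulerian polynomial in \eqref{eq:dfA}, and then to read off the $\gamma$-coefficients and their divisibility from Lemma~\ref{thm:a}. Precisely, I would aim to prove the chain
$$S_n(q,t/q)=\sum_{\sigma\in\S_n}q^{\inv\sigma-\exc\sigma}t^{\exc\sigma}=\sum_{\sigma\in\S_n}q^{2\res\sigma+\les\sigma}t^{\des\sigma}=A_n(q^2,q,t);$$
granting this, Lemma~\ref{thm:a} gives $S_n(q,t/q)=\sum_{0\le k\le(n-1)/2}a_{n,k}(q^2,q)\,t^k(1+t)^{n-1-2k}$, so Conjecture 1 holds with $\gamma_{n,k}(q)=a_{n,k}(q^2,q)=\sum_{\sigma\in\S_{n,k}}q^{2\res\sigma+\les\sigma}$, visibly a polynomial in $q$ with nonnegative integer coefficients.

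The first reduction is the inversion involution $\sigma\mapsto\sigma^{-1}$: it fixes $\inv$, and setting $j=\sigma^{-1}(i)$ one has $\sigma^{-1}(i)>i\iff j>\sigma(j)$, so $\exc(\sigma^{-1})=\defi\sigma$. Hence $(\inv,\exc)$ and $(\inv,\defi)$ are equidistributed on $\S_n$, and
$$S_n(q,t/q)=\sum_{\sigma\in\S_n}q^{\inv\sigma-\defi\sigma}t^{\defi\sigma}.$$

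The crux --- and the step I expect to cost the most --- is the identity
$$\inv\sigma=\defi\sigma+\cros\sigma+2\nest\sigma\qquad(\sigma\in\S_n),$$
equivalently $\inv\sigma-\defi\sigma=\cros\sigma+2\nest\sigma$. It can be quoted from the continued-fraction analysis of $\sum_\sigma q^{\inv\sigma}t^{\wex\sigma}$ in \cite{CSZ97,SZ10}, or proved by hand: every nesting is in particular an inversion, every inversion $(i,j)$ whose two entries are both excedances is a nesting of the first type, and every inversion whose two entries are both drops is a nesting of the second type, so the inversions that are nestings number exactly $\nest\sigma$; a careful count of the remaining inversions --- sorting them by the relative order of $i$, $j$, $\sigma(i)$, $\sigma(j)$, and isolating the contributions of drops, crossings and fixed points --- shows there are $\defi\sigma+\cros\sigma+\nest\sigma$ of them. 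Granting the identity,
$$S_n(q,t/q)=\sum_{\sigma\in\S_n}q^{\cros\sigma+2\nest\sigma}t^{\defi\sigma}.$$

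It remains to transport this sum through the bijective results of Section~2. Lemma~\ref{thm3} gives $\Phi$ with $(\nest,\cros,\defi)\sigma=(\ress,\les,\des)\Phi(\sigma)$, so the substitution $\sigma=\Phi^{-1}(\tau)$ rewrites the sum as $\sum_{\tau\in\S_n}q^{\les\tau+2\ress\tau}t^{\des\tau}$; and since the triples $(\ress,\les,\des)$ and $(\res,\les,\des)$ are equidistributed on $\S_n$, this equals $\sum_{\tau\in\S_n}q^{\les\tau+2\res\tau}t^{\des\tau}=A_n(q^2,q,t)$ by \eqref{eq:dfA}, which closes the chain. Lemma~\ref{thm:a} then yields the claimed expansion with $\gamma_{n,k}(q)=a_{n,k}(q^2,q)$, and substituting $p=q^2$ into the divisibility \eqref{eq:conj} shows that $(q^2+q)^k=q^k(1+q)^k$ divides $a_{n,k}(q^2,q)=\gamma_{n,k}(q)$. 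In short, the only new ingredient is the identity $\inv=\defi+\cros+2\nest$; everything else combines the three lemmas of Section~2 with the inversion involution.
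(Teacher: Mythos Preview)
Your proof is correct and follows the paper's argument essentially step for step: the inversion involution, the identity $\inv=\defi+\cros+2\nest$ (which the paper simply cites from \cite{SZ10}), Lemmas~3 and~4 to identify $S_n(q,t/q)$ with $A_n(q^2,q,t)$, and then Lemma~2 for the expansion and divisibility. Your combinatorial sketch of the identity is a little loose (the bookkeeping of the ``remaining'' inversions does not quite parse), but since you also offer the citation and the paper itself only cites the result, this is not a gap.
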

\begin{proof}
Since  $\inv\sigma^{-1}=\inv\sigma$ and $\drop\sigma=\exc\sigma^{-1}$ for any $\sigma\in \S_n$, we have 
$$
S_n(q,t)=\sum_{\sigma\in \S_n}q^{\inv\sigma}t^{\exc \sigma}=\sum_{\sigma\in \S_n}q^{\inv\sigma}t^{\drop\sigma}.
$$
Combining Lemmas~3 and 4 yields
\begin{align*}
A_n(p,q,t) =
\sum_{\sigma\in \S_n} p^{\nest \sigma} q^{\cros \sigma} t^{\defi\sigma}.
\end{align*}
Therefore,  as $\inv=\drop+\cros+2\nest$ (see \cite[Eq. (40)]{SZ10}),  we deduce
$$
S_n(q,t/q)=A_n(q^2,q,t).
$$
The desired result follows then from Lemma~2.
\end{proof}

\begin{rmk}
We conclude this paper with the following open problems:
\begin{itemize}
\item[1)] Is there a simple biective proof of Lemma~4? 
\item[2)] Is there a type $B$ analogue of  (2) for inversions and excedances?
\end{itemize}
\end{rmk}
\providecommand{\bysame}{\leavevmode\hbox to3em{\hrulefill}\thinspace}
\providecommand{\href}[2]{#2}

\end{document}